\documentclass[11pt,twoside,a4paper]{amsart}
\usepackage{amssymb,amsmath,latexsym,xypic,amscd,amsthm}
\usepackage{anysize}\marginsize{3.5cm}{3.5cm}{1.3cm}{2cm}
\usepackage{graphicx}
\emergencystretch=3em

\newtheorem{satz}{Satz}[section]

\newtheorem{corollary}[satz]{Corollary}
\newtheorem{definition}[satz]{Definition}

\newtheorem{lemma}[satz]{Lemma}
\newtheorem{proposition}[satz]{Proposition}
\newtheorem{remark}[satz]{Remark}

\newtheorem{theorem}[satz]{Theorem}
\newtheorem{conjecture}[satz]{Conjecture}

\newcommand\up[1]{\left\lceil #1 \right\rceil}

\newcommand{\F}{\mathcal{F}}

\newcommand{\conv}{{\rm conv}}

\renewcommand\dim{\mathrm{dim}}

\renewcommand\emptyset{\varnothing}
\renewcommand\ge{\geqslant}

\renewcommand\geq{\geqslant}
\renewcommand\leq{\leqslant}
\renewcommand\epsilon{\varepsilon}
\renewcommand\phi{\varphi}

\renewcommand\P{\mathbb P}

\newcommand\ehr{{\rm ehr}}
\newcommand\Vol{{\rm Vol}}

\newcommand\lolra{\Longleftrightarrow}
\newcommand\lora{\Longrightarrow}

\newcommand\aff{{\rm aff}}

\newcommand\cd{{\rm codeg}}

\newcommand\N{\mathbb N}
\newcommand\Q{\mathbb Q}
\newcommand\R{\mathbb R}
\newcommand\Z{\mathbb Z}

\begin{document}

\title[Projective Toric Manifolds with Dual Defect]{A Simple Combinatorial Criterion for Projective Toric
Manifolds with Dual Defect}
 
\author{Alicia Dickenstein}
\address[Alicia Dickenstein]{
   Departamento De Matem\'atica, FCEN, Universidad de Buenos Aires, C1428EGA Buenos Aires, Argentina
}
\thanks{AD was
partially supported by UBACYT X064, CONICET PIP 112-200801-00483 and ANPCyT PICT
20569, Argentina}
\email{
   alidick@dm.uba.ar
}

\author{Benjamin Nill}
\address[Benjamin Nill]{
   Department of Mathematics, University of Georgia, Athens, GA 30602, USA
}
\thanks{BN was supported by fellowship HA 4383/1 of 
the German Research Foundation (DFG) as a member of the Emmy Noether research group Lattice Polytopes led by Christian Haase, and by 
an MSRI postdoctoral fellowship as part of the Tropical Geometry program.}
\email{
   bnill@math.uga.edu
}

\begin{abstract}
We show that any smooth lattice polytope $P$ with codegree greater or equal than
$(\dim(P)+3)/{2}$ (or equivalently, with degree smaller than ${\dim (P)}/{2}$),
defines a dual defective projective toric manifold. This implies that $P$ is $\Q$-normal
(in the terminology of \cite{DDP09}) and answers partially an adjunction-theoretic conjecture by
Beltrametti-Sommese \cite{BS95} and \cite{DDP09}. Also, it follows from \cite{DR06} that smooth
lattice polytopes with this property are precisely strict Cayley polytopes, which
completes the answer in \cite{DDP09} of a question in \cite{BN07} for smooth polytopes.
\end{abstract}
\maketitle
\section{Introduction}

\subsection{Cayley polytopes and (co)degree}

Let $P \subset \R^n$ be a lattice polytope of dimension $n$. Given a positive integer
$k$, we denote by $kP$ the lattice polytope obtained as the Minkowski sum of $k$ copies of $P$,
and by $(kP)^{\circ}$ its interior. The {\em codegree} of $P$ is the following invariant:
\[\textstyle \cd(P):=\min \{ k\, | \, (kP)^{\circ} \cap \Z^n \neq\emptyset\}.\]
The number $\deg(P) := n+1-\cd(P)$ is called the {\em degree} of $P$, see \cite{BN07}. 

It has recently been proven \cite{HNP08} that, if the codegree of $P$ is large with respect to $n$, then $P$ lies between two 
adjacent integral hyperplanes (i.e., its lattice width is one). This gave a positive answer to a question of V.V. Batyrev and 
the second author \cite{BN07}. Actually, in \cite{HNP08} a stronger statement was proven.
For this, let us recall the notion of a (strict) Cayley polytope, see \cite{HNP08, DDP09}.

\begin{definition}{\rm Let $P_0, \ldots, P_k \subset \R^m$ be lattice polytopes
such that 
the dimension of the affine span $\aff(P_0, \ldots, P_k)$ equals $m$. 
Then we define the {\em Cayley polytope} of $P_0, \ldots, P_k$ as
\[P_0 * \cdots * P_k := \conv(P_0 \times e_0, \ldots, P_k \times e_k) \subseteq \R^m \oplus \R^{k+1},\]
where $e_0, \ldots, e_k$ is a lattice basis of $\R^{k+1}$. Note that it is a lattice polytope 
of dimension $m+k$. If additionally all $P_0, \ldots, P_k$ are {\em strictly isomorphic}, i.e., 
they have the same normal fan, then we call $P_0 * \cdots * P_k$ a {\em strict Cayley polytope}. 
\label{def-cay}
}
\end{definition}

It was observed \cite{BN07} that in this situation $\cd(P) \geq k+1$ holds, or equivalently, $\deg(P) \leq m$. 
Now, in \cite{HNP08} it was shown that there is also a partial converse. Namely, if $n > f(\deg(P))$, where $f$ is a quadratic polynomial, then $P$ 
is a Cayley polytope of lattice polytopes in $\R^{f(\deg(P))}$. It is believed that this is not a sharp bound, 
there is hope that $f(\deg(P))$ may be simply replaced by $2\deg(P)$. 

\begin{conjecture}{\rm Let $P \subset \R^n$ be an $n$-dimensional lattice polytope. 
If $\cd(P)\geq \frac{n+3}{2}$ (or equivalently, $n > 2 \deg(P)$), 
then $P$ is a Cayley polytope of lattice polytopes $P_0, \ldots, P_k \subseteq \R^m$, where 
$m \leq 2 \deg(P)$.
\label{cay-conj}
}
\end{conjecture}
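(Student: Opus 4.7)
The plan is to focus on the smooth case of the conjecture, where the tools of toric geometry and classical adjunction theory apply. Write $(X_P, L_P)$ for the polarized toric manifold associated to a smooth $P$. My strategy is to reduce the conclusion to showing that $X_P$ is \emph{dual defective}, because by the classification of Di~Rocco \cite{DR06} smooth lattice polytopes with this property are precisely the strict Cayley polytopes, and in particular Cayley polytopes. The quantitative control on the parameter $m$ will fall out of the same Cayley description.

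To bridge the combinatorial hypothesis and the algebro-geometric conclusion, I would analyze the sequence of adjoint polytopes $\adP{s}$, obtained by pushing each facet of $P$ inwards by lattice distance $s$. The hypothesis $\cd(P) \ge (n+3)/2$ says that $\adP{s}$ has no interior lattice point for $s < (n+3)/2$, which translates on the toric variety into $H^0(K_{X_P} + s L_P) = 0$ for such $s$. Hence the \emph{nef value} $\tau(X_P, L_P)$ is at least $(n+3)/2$, and the nef value morphism $\phi : X_P \to Y$ from Beltrametti--Sommese \cite{BS95} is nontrivial with general fiber of dimension strictly greater than $n/2$. In the toric category $\phi$ is again toric, and one expects its general fibers to be projective spaces.

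This adjunction structure can then be converted into the required Cayley description. A projective bundle whose fibers exceed half the total dimension is dual defective, so Di~Rocco's theorem applies to $(X_P, L_P)$ and immediately gives the smooth case; equivalently, one can read off directly that $P = Q_0 * \cdots * Q_k$ with strictly isomorphic lattice polytopes $Q_i \subseteq \R^m$, where $m = \dim Y$ and $k+1$ equals the fiber dimension of $\phi$ plus one. The bound $m \le 2 \deg(P)$ then drops out of the inequality $\dim(\text{fiber}) \ge \lceil (n+1)/2 \rceil$ coming from the nef value estimate.

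The principal obstacle I anticipate is the adjunction-theoretic step: from the vanishing of adjoint sections one must extract not merely $\tau \ge (n+3)/2$ but also sufficient regularity of $\phi$ (equidimensionality, projective space fibers of the expected dimension), so that Di~Rocco's criterion applies verbatim. This rests on a delicate analysis of the toric nef value morphism for large $\tau$, and is where most of the technical effort should concentrate. The non-smooth case of the conjecture appears out of reach by these methods, since neither the nef value morphism nor the Di~Rocco classification is available without smoothness, and a fundamentally different, presumably purely combinatorial, approach would be required.
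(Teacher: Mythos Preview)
Your proposal has a genuine gap at precisely the point you flag as the ``principal obstacle,'' and it is more serious than a technical regularity issue. The vanishing $H^0(K_{X_P}+sL_P)=0$ for integers $s<\cd(P)$ controls the \emph{spectral value} $\mu$, not the \emph{nef value} $\tau$; one only has $\mu\le\cd(P)$ a priori (cf.\ \cite{DDP09}), and on a toric manifold the most one extracts from ``no sections $\Rightarrow$ not globally generated $\Rightarrow$ not nef'' is $\tau>\cd(P)-1$, which for $n$ even falls just short of forcing the fibre dimension of the nef value morphism to exceed $n/2$. Passing from the codegree hypothesis to $\tau\ge(n+3)/2$ (equivalently, to $\Q$-normality $\mu=\tau=\cd(P)$) is exactly the content of the Beltrametti--Sommese conjecture in this range, and is precisely the assumption that \cite{DDP09} had to impose. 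Your plan therefore reproduces the framework of \cite{DDP09} but does not supply the missing step; it is circular to invoke the nef value morphism in order to reach Di~Rocco's criterion when the required bound on $\tau$ is equivalent to what must be proved.

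The paper's route is entirely different and bypasses adjunction theory. It shows \emph{directly and combinatorially} that for any simple lattice polytope with $\cd(P)\ge(n+3)/2$ one has $c(P)=0$. The argument expresses $c(P)$ as a specific linear combination of the vanishing equations $|(iG)^\circ\cap\Z^n|=0$ for faces $G$ of dimension $>\deg(P)$ (which vanish by Stanley monotonicity), via the inclusion--exclusion formula for simple polytopes, the $h^*$-expansion of Ehrhart polynomials, and a binomial identity verified by Zeilberger's algorithm. Only \emph{after} $c(P)=0$ is established does one invoke \cite{DR06} to obtain the strict Cayley structure; $\Q$-normality then follows as a \emph{consequence} via \cite{DDP09}, not as an input. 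So the decisive new idea is an Ehrhart-theoretic identity, not a refinement of the nef value analysis.
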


Note that we cannot expect to get better than this, since there exist $n$-dimensional lattice simplices $P$ for $n$ even with 
$n = 2 \deg(P)$ that are not Cayley polytopes (for any $k \geq 1$), see Example 1.6 in \cite{DDP09}. 

Now, recall that a lattice polytope $P$ of dimension $n$ is called {\em smooth}, if 
there are exactly $n$  facets intersecting at any vertex of $P$ and
 the primitive inner normals to these facets are a basis of $\Z^n$. 
As a consequence of the main result of our paper, Theorem~\ref{th:main}, we can settle this important case:

\begin{corollary}
Conjecture~\ref{cay-conj} holds for smooth lattice polytopes.
\label{smoothy}
\end{corollary}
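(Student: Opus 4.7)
The plan is to derive Corollary \ref{smoothy} by combining Theorem \ref{th:main} (the main result of the paper) with Di Rocco's classification \cite{DR06} of smooth projective toric manifolds with dual defect.

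First, given a smooth $n$-dimensional lattice polytope $P$ with $\cd(P) \geq (n+3)/2$, I would apply Theorem \ref{th:main} directly to conclude that the associated polarized projective toric manifold $X_P$ is dual defective. Second, I would invoke Di Rocco's classification \cite{DR06}: a smooth projective toric manifold has positive dual defect if and only if its defining polytope is a strict Cayley polytope $P_0 * \cdots * P_k$ of strictly isomorphic lattice polytopes $P_0, \ldots, P_k \subset \R^m$ with $k > m$, where the strict inequality corresponds precisely to positive dual defect. This immediately exhibits $P$ as a Cayley polytope with these parameters.

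It then remains to verify the numerical bound $m \leq 2 \deg(P)$ demanded by Conjecture \ref{cay-conj}. From $n = m + k$ and the general inequality $\cd(P) \geq k+1$, one has $\deg(P) \leq n - k = m$. For the reverse direction, I would show that $\cd(P) = k+1$ exactly: each smooth $P_i$ contains a translate of the unimodular simplex $\Delta_m$ at a chosen vertex, and because the $P_i$ are strictly isomorphic these translates can be chosen at ``corresponding'' vertices, so that $P_0 + \cdots + P_k$ contains a translate of $(k+1)\Delta_m$, which has interior lattice points as soon as $k \geq m$. Hence $\deg(P) = m$, and the bound $m \leq 2\deg(P)$ is automatic.

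The principal obstacle is the first step, which carries the entire combinatorial substance of the paper: the proof of Theorem \ref{th:main}, asserting that smoothness together with large codegree forces dual defectiveness. Once this is available, the remaining steps consist of invoking a known classification and performing elementary manipulations with the codegree.
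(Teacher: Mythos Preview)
Your proposal is correct, but it is more elaborate than necessary. In the paper, Corollary~\ref{smoothy} is an immediate one-line consequence of Theorem~\ref{th:main}: implication (i)$\Rightarrow$(ii) already asserts that $P \cong P_0 * \cdots * P_k$ with $k+1 = \cd(P)$ and $\dim(P_j) = \deg(P)$, so $m = \deg(P) \leq 2\deg(P)$ is automatic. You instead invoke Theorem~\ref{th:main} only for (i)$\Rightarrow$(iii) (dual defectiveness), then reproduce the passage to the Cayley structure via Di~Rocco's classification, and finally re-establish the equality $\cd(P)=k+1$ by a direct combinatorial argument (unimodular simplices at corresponding vertices of the strictly isomorphic smooth $P_i$, forcing an interior lattice point in $P_0+\cdots+P_k$ once $k\geq m$). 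This last step is a genuine, self-contained alternative to the paper's route, which in the proof of Theorem~\ref{th:main} obtains $k+1=\cd(P)$ by first deducing $\Q$-normality from \cite[Prop.~3.9]{DDP09} and then applying \cite[Thm.~1.12]{DDP09}. Your argument is more elementary at that point, but you should note that you are effectively re-deriving part of statement~(ii) of Theorem~\ref{th:main} rather than simply quoting it.
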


Actually, Theorem~\ref{th:main} shows much more. 
For this, let us remark that if $P$ is a Cayley polytope given as in Definition \ref{def-cay}, 
then $k > n/2$ implies $\cd(P) > n/2 + 1$, so $\cd(P) \geq (n+3)/2$. 
Now, Theorem~\ref{th:main} yields that in the smooth case also the converse is true. 
Namely, if $P$ is a smooth $n$-dimensional lattice polytope $P$ with 
$\cd(P) \geq (n+3)/2$, then $P$ is a Cayley polytope of $k+1$ lattice polytopes in dimension $m = \deg(P)$ 
(or equivalently, $k = \cd(P) - 1$). In particular, $k > n/2$. 

In the singular case, we cannot expect such a strong statement, as we see from the following example. 
Let $Q$ be the convex hull of $(0,0),(2,0),(0,2)$. 
This is a lattice triangle of degree $1$. Since taking lattice pyramids does not change the degree \cite{BN07}, 
the three-fold lattice pyramid $Q'$ over $Q$ is still a five-dimensional lattice simplex of degree $1$. 
Finally, let $P$ be defined as $Q' \times [0,2]$. This is a simple lattice polytope of dimension $n=6$, degree $2$ and codegree $5$. 
Hence, $\cd(P) \geq (n+3)/2$. We see that $P$ is a Cayley polytope of four lattice polytopes in dimension $m=3$. 
Here $k=3$ is maximal, yet, $k \not> n/2$.

\subsection{$A$-discriminants and dual defect toric varieties}

Given a configuration $A = \{a_1, \dots, a_N\}$ of  lattice points in $\R^n$
one gets an associated {\em projective toric variety} $X_A \subset \P^{N-1}$, 
rationally parametrized from the $n$-th torus
by $(t_1, \dots, t_n) \mapsto (t^{a_1}: \dots : t^{a_N})$.
The dimension of $X_A$
equals  the dimension of the affine span of the points in $A$ and, in fact, $X_A = X_{A'}$ for any lattice
configuration $A'$  affinely isomorphic to $A$ (that is, $A'$ is the image of $A$ by an injective affine linear transformation).  
The {\em dual variety} $X_A^*$ is defined as the Zariski closure of the locus of hyperplanes in $(\P^{N-1})^*$ which are tangent to a smooth point of $X_A$. 
Generically, $X_A^*$ is a hypersurface whose defining equation (defined up to sign) is 
called the {\em A-discriminant} $\Delta_A$. We call $X_A$ a {\em dual defect variety}, 
if $X_A^*$ is not a hypersurface, in which case we set $\Delta_A := 1$. The {\em dual defect} of $X_A$ is defined 
as $N-2-\dim(X_A^*)$. There is a vast literature on the study of $A$-discriminants starting 
with the seminal work of Gel'fand, Kapranov and Zelevinsky \cite{GKZ94}. 

Here, we focus on the case, where 
$A = P \cap \Z^n$ for a smooth $n$-dimensional lattice polytope $P \subset \R^n$. 
In this case, $X_A$ is a projective toric manifold. It is isomorphic to the 
abstract toric variety associated to the normal fan of $P$ 
via the projective embedding given by the very ample line bundle corresponding to $P$ 
(see \cite{Ful93} for standard results in toric geometry). 
Since $X_A$ is smooth, the degree of $\Delta_A$ can be computed by the following combinatorial invariant $c(P)$, 
see Chapter 9 of \cite{GKZ94} or \cite{DR06}.  Let us denote by $\F(P)_j$ the set of $j$-dimensional faces of $P$. Then
\[c(P) := \sum_{j=0}^n (-1)^{n-j} (j+1) \sum_{F \in \F(P)_j} \Vol_\Z(F),\]
where $\Vol_\Z(F)$ is the {\em normalized volume} of $F$, defined such that the fundamental parallelepiped of $\aff(F) \cap \Z^n$ 
has normalized volume $\dim(F)!$. In particular, 
the normalized volume of any lattice polytope is a natural number. 
For formulas of $\deg(\Delta_A)$ in the general (singular) case we refer to \cite{CC07, DFS07, Est08, MT08}.

In particular, $X_A$ has dual defect if and only if $c(P) = 0$. 
Our main result, Theorem \ref{th:main}, shows that there is a simple equivalent condition purely in terms of the codegree of 
the smooth polytope $P$.

\subsection{The relation to the adjunction theory of polarized varieties}

Let us assume that $(X,L)$ is a {\em polarized manifold}, i.e., 
$X$ is a projective manifold with ample line bundle $L$ and canonical bundle $K_X$.
In the adjunction theory of polarized varieties (we refer to the book \cite{BS95}) 
there are two invariants which have been studied quite intensively: 
The {\em (unnormalized) spectral value} $\mu$, which is the supremum of all $t$ such that 
$t L + K_X$ does not have global sections. And the {\em nef-value} $\tau$, which equals the infimum of all $t$ 
such that $t L + K_X$ is not nef (i.e., numerically-effective). Using the notation of \cite{DDP09}, let us 
call $(X,L)$ {\em $\Q$-normal}, if these two invariants coincide. The following conjecture \cite[Conj.7.1.8]{BS95} has been confirmed for $n \leq 7$ 
and in many other cases.

\begin{conjecture}[Beltrametti, Sommese] 
If an $n$-dimensional polarized manifold $(X,L)$ satisfies $\mu > \frac{n+1}{2}$, then it is $\Q$-normal.
\label{bs-conj}
\end{conjecture}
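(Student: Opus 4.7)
The plan is to settle the conjecture in the case where $(X,L) = (X_P, L_P)$ is the smooth polarized toric manifold attached to a smooth $n$-dimensional lattice polytope $P$, which is the range addressed by Theorem~\ref{th:main}. In this toric setting, the standard dictionary between equivariant global sections and interior lattice points identifies $\mu(X_P, L_P) = \cd(P)$, so the hypothesis $\mu > \frac{n+1}{2}$ reads $\cd(P) > \frac{n+1}{2}$; for $n$ odd this is exactly the codegree bound $\cd(P) \ge \frac{n+3}{2}$ of Theorem~\ref{th:main}, while for $n$ even there is a half-unit gap, responsible for the ``partial'' nature of the answer advertised in the abstract.

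Granting the bound $\cd(P) \ge \frac{n+3}{2}$, I would invoke Theorem~\ref{th:main} to conclude that $X_P$ is dual defective, and then use the dictionary of \cite{DR06} recalled above: a smooth lattice polytope is dual defective iff it is a strict Cayley polytope. Writing $P = P_0 * \cdots * P_k$ with $k \ge 1$ and $P_0, \ldots, P_k \subseteq \R^m$ strictly isomorphic, one has $k = \cd(P) - 1$ and $m = \deg(P) < n/2$.

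Because the $P_i$ share a common normal fan, $P$ exhibits $X_P$ as a projective bundle $\pi \colon X_P \to X_{P_0}$ with fibers $\P^k$. Writing the polarization as $L_P = \xi + \pi^* M$ with $\xi$ the relative hyperplane class, the relative canonical bundle formula gives
\[
t L_P + K_{X_P} \;=\; (t - k - 1)\, \xi \;+\; \pi^* N_t
\]
for a line bundle $N_t$ on $X_{P_0}$ depending linearly on $t$. For $t < k + 1$ the fiber twist $(t - k - 1)$ is strictly negative, which forces both non-nefness and vanishing of global sections already on the fibers; at $t = k + 1$ the expression collapses to a pullback from the base, and an inspection at this critical value identifies $\tau = \mu = k+1 = \cd(P)$, which is precisely $\Q$-normality.

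The main obstacle in implementing this plan is the analysis at the critical value $t = k+1$: one has to show that the base bundle $N_{k+1}$ is simultaneously nef and carries a nontrivial global section, so that both $\tau$ and $\mu$ are pinned to the fiber critical value $k+1$ rather than being pushed up by a base obstruction. Teasing this compatibility out of the Cayley data --- in particular, using the strict isomorphism of the $P_i$ --- is the heart of the matter. The half-unit gap for $n$ even remains outside the reach of Theorem~\ref{th:main}, and the full Beltrametti--Sommese Conjecture~\ref{bs-conj} for arbitrary polarized manifolds lies beyond these polytopal methods.
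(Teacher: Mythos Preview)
The statement you are attempting is a \emph{conjecture} in the paper, not a theorem; the paper does not prove it and offers no proof to compare against. What the paper does establish is the weaker corollary that an $n$-dimensional polarized \emph{toric} manifold with $\mu > \frac{n+2}{2}$ is $\Q$-normal, obtained directly from Theorem~\ref{th:main} together with the inequality $\mu \leq \cd(P)$ from \cite{DDP09}. You seem aware of this, since you restrict to the toric case from the outset and flag both the even-$n$ gap and the non-toric case as lying outside your reach.

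One factual slip: you assert that the toric dictionary gives $\mu(X_P,L_P) = \cd(P)$, but a priori only the inequality $\mu \leq \cd(P)$ is available (this is what the paper quotes from \cite{DDP09}); the equality $\mu = \cd(P)$ is a \emph{conclusion} of Theorem~\ref{th:main} under the codegree hypothesis, not a standing fact. Fortunately the inequality points in the direction you need: $\mu > (n+1)/2$ together with $\mu \leq \cd(P)$ still gives $\cd(P) > (n+1)/2$, and since $\cd(P)$ is an integer this yields $\cd(P) \geq (n+3)/2$ for $n$ odd and only $\cd(P) \geq (n+2)/2$ for $n$ even, matching your gap analysis.

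Your projective-bundle computation for $\Q$-normality is essentially a sketch of Proposition~3.9 of \cite{DDP09}; in the proof of Theorem~\ref{th:main} the paper simply cites that proposition rather than carrying out the analysis at $t = k+1$. So your route coincides with the paper's, with the cited black box partially unpacked --- and, as you yourself acknowledge, not fully closed. If you want to complete that step, that proposition is where to look.
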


If $(X,L)$ is given by a smooth lattice polytope $P$, then $\mu$ and $\tau$ can be computed purely in terms of $P$, 
and we have $\mu \leq \cd(P)$, see \cite{DDP09}. Hence, Theorem~\ref{th:main} has the following corollary.

\begin{corollary}
If an $n$-dimensional polarized toric manifold $(X,L)$ satisfies $\mu > \frac{n+2}{2}$, then it is $\Q$-normal.
\end{corollary}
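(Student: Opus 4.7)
The plan is to reduce the corollary to a direct application of Theorem~\ref{th:main} via the standard dictionary between polarized toric manifolds and smooth lattice polytopes.

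First, I would note that since $(X,L)$ is a polarized toric manifold, it arises from a smooth lattice polytope $P \subset \R^n$ as explained in the introduction, and the spectral value $\mu$ of $(X,L)$ satisfies $\mu \leq \cd(P)$ by the computation recalled from \cite{DDP09}. So the hypothesis $\mu > (n+2)/2$ gives $\cd(P) > (n+2)/2$.

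The next step is a simple parity observation: since $\cd(P)$ is an integer, the strict inequality $\cd(P) > (n+2)/2$ forces $\cd(P) \geq (n+3)/2$. Indeed, if $n$ is odd then $(n+2)/2$ is a half-integer and the next integer is $(n+3)/2$; if $n$ is even then $(n+2)/2$ is an integer and strict inequality gives $\cd(P) \geq (n+4)/2 > (n+3)/2$. Either way, $P$ satisfies the hypothesis of Theorem~\ref{th:main}.

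Theorem~\ref{th:main} then guarantees that $X_A$ is dual defective, and the abstract already notes that this implies $\Q$-normality in the sense of \cite{DDP09}, which is precisely the conclusion we want. There is no hard step here beyond invoking the main theorem; the only obstacle is the bookkeeping of the strict-versus-nonstrict inequality $\mu > (n+2)/2$ against $\cd(P) \geq (n+3)/2$, which is handled by the integrality of $\cd(P)$.
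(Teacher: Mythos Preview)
Your proposal is correct and follows essentially the same route as the paper: use $\mu \le \cd(P)$ from \cite{DDP09}, pass from $\mu > (n+2)/2$ to $\cd(P) \ge (n+3)/2$ via integrality of $\cd(P)$, and invoke Theorem~\ref{th:main}. The only cosmetic difference is that you route through the dual-defect conclusion and then appeal to the abstract for $\Q$-normality, whereas the paper simply quotes the final sentence of Theorem~\ref{th:main}, which states $\Q$-normality (with $\mu=\cd(P)=\tau$) directly.
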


In particular, Conjecture \ref{bs-conj} holds, except possibly if $n$ is even and $(n+1)/2 < \mu  \leq (n+2)/2$.

\subsection{The main result}

The goal of this paper is to complete the proof of the following theorem: 

\begin{theorem}\label{th:main}
Let $P\subset \R^n$ be a smooth lattice polytope of dimension $n$. 
Then the following statements are equivalent:

\begin{enumerate}
\item[{\rm (i)}] $\cd(P)\geq \frac{n+3}{2}$,   
\item[{\rm (ii)}] $P$ is affinely isomorphic to a strict Cayley 
polytope $P_0 * \ldots * P_k$, where $k+1=\cd(P)$ with $k>\frac{n}{2}$
(and $\dim(P_j) = \deg(P)$ for all $j = 0, \dots, k$),
\item[{\rm (iii)}]  the toric polarized variety $(X,L)$ corresponding to
$P$ is dual defective (necessarily, with dual defect $\delta = 2 \cd (P) -  2 - n$),
\item[{\rm (iv)}] $c(P) = 0$.
\end{enumerate}
If these conditions hold, then $(X,L) $ is $\Q$-normal with $\mu=\cd(P) =\tau$.
\end{theorem}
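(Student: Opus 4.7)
\medskip
\noindent\textbf{Proof proposal.} The plan is to close the chain by establishing the cycle (ii) $\Rightarrow$ (iii) $\Leftrightarrow$ (iv) $\Rightarrow$ (ii), together with (ii) $\Rightarrow$ (i) and, as the heart of the argument, (i) $\Rightarrow$ (iv). The concluding $\Q$-normality claim will then be a byproduct of (ii): once $P$ is identified as a strict Cayley polytope $P_0 * \cdots * P_k$, the toric translation of the adjunction invariants in \cite{DDP09} forces $\mu = \tau = k+1 = \cd(P)$, collapsing the Beltrametti-Sommese inequality to an equality.

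Most of the easy links are already in the literature. The implication (ii) $\Rightarrow$ (i) is exactly the inequality $\cd(P_0*\cdots*P_k) \geq k+1$ recalled from \cite{BN07}, combined with the hypothesis $k > n/2$. The equivalence (iii) $\Leftrightarrow$ (iv) is the combinatorial formula $\deg \Delta_A = c(P)$ valid for smooth $P$ (cf.\ Chapter~9 of \cite{GKZ94} and \cite{DR06}); $X_A^*$ fails to be a hypersurface precisely when $c(P)=0$. For (iii) $\Rightarrow$ (ii) we invoke Di Rocco's classification \cite{DR06}: a dual defective smooth projective toric variety is a projective bundle, and its polytope must be a strict Cayley polytope of strictly isomorphic fibers; the bound $k > n/2$, the equality $\dim(P_j) = \deg(P)$ and the formula $\delta = 2\cd(P) - 2 - n$ for the dual defect all then follow by comparing the Cayley parameters $(m,k)$ with the codegree $k+1$ and dimension $m+k = n$ of $P$.

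The substantial new content is the implication (i) $\Rightarrow$ (iv), and my approach to it is to encode $c(P)$ through an Ehrhart-theoretic generating polynomial attached to the face lattice of $P$. Specifically, I would consider
\[
G_P(t) \;:=\; \sum_{F \in \F(P)} (-1)^{\dim F}(\dim F + 1)\,\ehr_F(t),
\]
a polynomial of degree at most $n$ whose leading coefficient equals $c(P)/n!$ up to sign, after using that the leading coefficient of $\ehr_F(t)$ is $\Vol_\Z(F)/(\dim F)!$. Via Ehrhart reciprocity and the Euler identity on $\partial P$, the values $G_P(-k)$ for positive integers $k$ can be rewritten as alternating sums of interior-lattice-point counts $\#((kF)^{\circ} \cap \Z^{\aff F})$ over faces. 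The codegree hypothesis $\cd(P) \geq (n+3)/2$ forces $(kP)^{\circ} \cap \Z^n = \emptyset$ for $k = 1, \dots, \lfloor (n+1)/2 \rfloor$, and smoothness of $P$ propagates analogous vanishings to all faces (each face of a smooth polytope is smooth and inherits a strong codegree lower bound). Matching the number of such vanishings against the degree of $G_P$, together with a symmetry inherited from reciprocity, pins down $G_P$ enough to force its leading coefficient, and hence $c(P)$, to vanish.

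The main obstacle I foresee is assembling these face-by-face vanishings into a single identity robust enough to kill $c(P)$. Smoothness has to enter essentially, since the singular counterexample $Q' \times [0,2]$ recalled above shows that the analogous statement fails for simple polytopes. The mechanism that should power this is Delzantness at every vertex: it ensures that each facet, and recursively each lower-dimensional face, is itself a smooth lattice polytope, so the codegree hypothesis on $P$ cascades into controlled vanishings of interior lattice points on all faces simultaneously. With that combinatorial identity in hand, the chain closes and the $\Q$-normality consequence follows as noted.
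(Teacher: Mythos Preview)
Your outline of the cycle and the citations for the ``easy'' links are broadly in line with the paper, but the heart of your argument, the step (i)~$\Rightarrow$~(iv), has a genuine gap. The claim that the leading coefficient of
\[
G_P(t) \;=\; \sum_{F \in \F(P)} (-1)^{\dim F}(\dim F + 1)\,\ehr_F(t)
\]
equals $c(P)/n!$ up to sign is false: the only face contributing to the $t^n$--term is $P$ itself, so that coefficient is $(-1)^n(n+1)\Vol_\Z(P)/n!$, which is never zero. Hence ``pinning down $G_P$ enough to force its leading coefficient to vanish'' cannot yield $c(P)=0$. The invariant $c(P)$ is an alternating sum of normalized volumes of \emph{all} faces and does not sit as a single coefficient of $G_P$; in the paper it is recovered instead as a carefully weighted linear combination of the \emph{values} $|(iG)^\circ\cap\Z^n|$ over higher-dimensional faces $G$ at several integers $i$, via the inclusion--exclusion formula for simple polytopes and a pair of binomial identities (one handled by Zeilberger's algorithm).

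Relatedly, your diagnosis of where smoothness enters is off. The example $Q'\times[0,2]$ does \emph{not} violate (i)~$\Rightarrow$~(iv); it is a simple polytope with $\cd = 5 \ge (6+3)/2$ for which the paper's Theorem~\ref{main-thm} still gives $c(P)=0$. What fails there is only the strong Cayley conclusion (ii) with $k>n/2$. In fact the implication (i)~$\Rightarrow$~(iv) holds for all \emph{simple} lattice polytopes: the ``cascade'' of face vanishings you want comes from Stanley's monotonicity theorem ($\deg(F)\le\deg(P)$, hence $\cd(F)\ge\dim(F)-\deg(P)+1$), not from Delzantness. Smoothness is needed elsewhere in the cycle: for the identity $\deg\Delta_A=c(P)$ underlying (iii)~$\Leftrightarrow$~(iv), and for Di~Rocco's classification giving (iv)~$\Rightarrow$~(ii); it is also at that point, after first extracting $\Q$-normality from the Cayley structure, that the paper invokes \cite{DDP09} to pin down $k+1=\cd(P)$ and $\mu=\cd(P)=\tau$.
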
 

In particular, (ii) implies that $P$ is combinatorially simply a product of a $(\cd(P)-1)$-simplex and 
a $\deg(P)$-dimensional polytope. Most parts of the proof have already been done by S. Di Rocco \cite{DR06} 
and by S. Di Rocco, R. Piene and the first author \cite{DDP09}. The only new ingredient 
is Theorem~\ref{main-thm}, which is used to remove 
the $\Q$-normality assumption in \cite{DDP09}. 

\begin{proof}
We have (i) $\lora$ (iv) by Theorem~\ref{main-thm}(i) below. 
Propositions 2.3 of \cite{DR06} shows that (iv) holds if and only if 
\[X \cong \P(L_0 \oplus \cdots \oplus L_{\frac{n+\delta}{2}}),\]
where $L_i$ are ample line bundles on a toric manifold of dimension $\frac{n-\delta}{2}$, 
where $\delta \geq 1$ is the dual defect of $X \subseteq \P^{N-1}$. 
An equivalent formulation was given in Proposition 3.7 of \cite{DR06} as follows:
\begin{equation}
P \cong P_0 * \cdots * P_{\frac{n+\delta}{2}},
\label{crit1}
\end{equation}
for strictly isomorphic smooth $\frac{n-\delta}{2}$-dimensional lattice polytopes $P_i$ ($i=0, \ldots, \frac{n+\delta}{2}$), 
where $\delta \geq 1$. 
Note that this immediately implies (iv) $\lora$ (i). Moreover, 
Proposition 3.9 in \cite{DDP09} yields that (\ref{crit1}) implies $\Q$-normality. 
Therefore, we can apply the main theorem, Theorem 1.12, in \cite{DDP09} to deduce that (i) $\lora$ (ii) 
$\lolra$ (iii) $\lora$ (iv), as well as $\mu=\cd(P)=\tau$.
\end{proof} 

\subsection{An illustration of our result}

Let us determine when the 
product of projective spaces embedded by Segre and Veronese embeddings has dual defect. 
This is Corollary 5.11 of Chapter 1 of \cite{GKZ94}. This reproves also the criterion 
for the existence of the multigraded discriminant (see Chapter 13, Proposition 2.3) 
and for the non-triviality of the hyperdeterminant (see Chapter 14, Theorem 1.3) 
described in \cite{GKZ94}.

We only need the following simple corollary of Theorem~\ref{th:main}:

\begin{corollary}
Let $P_1, \ldots, P_r$ be smooth lattice polytopes. 
Then the toric variety associated to $P_1 \times \cdots \times P_r$ has dual defect if and only if
\begin{equation}
2 \max(\cd(P_1), \ldots, \cd(P_r)) \geq \dim(P_1) + \cdots + \dim(P_r)+3.
\label{criterion}
\end{equation}
\end{corollary}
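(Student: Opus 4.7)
The plan is to apply Theorem~\ref{th:main} directly to the product polytope $P := P_1 \times \cdots \times P_r \subset \R^{n_1} \oplus \cdots \oplus \R^{n_r}$, where $n_i := \dim(P_i)$ and $n := n_1 + \cdots + n_r$. The toric variety attached to $P$ is exactly the one embedded by the full set of lattice points of $P$, so the equivalence (i) $\lolra$ (iii) of Theorem~\ref{th:main} will reduce the corollary to the single computation $\cd(P) \geq (n+3)/2$, provided $P$ is verified to be smooth.

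The first step will be to check that $P$ is itself a smooth lattice polytope of dimension $n$. A vertex of $P$ has the form $v = (v_1, \ldots, v_r)$ with each $v_i$ a vertex of $P_i$, and the facets of $P$ through $v$ are precisely the slabs $P_1 \times \cdots \times F_i \times \cdots \times P_r$ with $F_i$ a facet of $P_i$ containing $v_i$. The primitive inner normal to such a slab is the primitive inner normal of $F_i$ at $v_i$ extended by zeros across the other factors. Since each $P_i$ is smooth, these normals assemble into a lattice basis of $\Z^{n_1} \oplus \cdots \oplus \Z^{n_r} = \Z^n$, proving smoothness of $P$.

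The second step is the codegree identity $\cd(P) = \max_{i=1,\ldots,r} \cd(P_i)$. This follows at once from $kP = kP_1 \times \cdots \times kP_r$ together with the fact that the interior of a product is the product of the interiors: an interior lattice point of $kP$ is the same datum as an $r$-tuple of interior lattice points of the $kP_i$. Hence the smallest $k$ making $(kP)^{\circ} \cap \Z^n$ nonempty is exactly $\max_i \cd(P_i)$.

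Combining these two observations and applying Theorem~\ref{th:main} to the smooth polytope $P$, the toric variety associated with $P_1 \times \cdots \times P_r$ has dual defect if and only if $\cd(P) \geq (n+3)/2$, which unravels to precisely the inequality~(\ref{criterion}). There is no serious obstacle here; the only point that needs care is the description of the inner normal fan of a product used in the first step, and everything substantive has been absorbed into Theorem~\ref{th:main}.
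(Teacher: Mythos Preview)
Your proof is correct and follows the same approach as the paper: compute $\cd(P_1 \times \cdots \times P_r) = \max_i \cd(P_i)$ and invoke Theorem~\ref{th:main}. The paper's proof is a one-liner that omits the verification that the product is smooth, which you supply explicitly; otherwise the arguments coincide.
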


\begin{proof}
Note that the definition of the codegree immediately implies 
$\cd(P_1 \times P_r) = \max(\cd(P_1), \ldots, \cd(P_r))$.
\end{proof}

Let $S_n$ be again the $n$-dimensional unimodular simplex, i.e., $\Vol_\Z(S_n) = 1$. 
It is easy to see that $\cd(S_n) = n+1$. 
Let us define simplices $P_i := d_i S_{k_i}$ for positive natural numbers $d_1, \ldots, d_r$, $k_1, \ldots, k_r$. 
In this case, we can reformulate the criterion (\ref{criterion}) as 
\[2 \max\left(\up{\frac{k_1+1}{d_1}}, \ldots, \up{\frac{k_r+1}{d_r}}\right) \geq k_1 + \cdots + k_r+3.\]
This condition can only  be satisfied, if the maximum is attained for $i \in \{1, \ldots, r\}$ with $d_i = 1$. 
Hence, we get that the toric variety associated to $P_1 \times \cdots \times P_r$ has dual defect 
if and only if 
\[2 k_i > k_1 + \cdots + k_r \text{ for some } i \in \{1, \ldots, r\} \text{ such that } d_i = 1.\] 
For instance, let us consider the simplest possible case: $r=2$, $d_1=d_2=1$. Here, we 
deduce that $\P^{k_1} \times \P^{k_2}$ embedded by the Segre embedding has dual defect 
if and only if $k_1 = k_2$. In Chapter 9, Example 2.10(b), of \cite{GKZ94} this was reproven by explicitly calculating 
the invariant $c(S_{k_1} \times S_{k_2})$ using a nice observation on sums of binomial coefficients (Chapter 9, Lemma 2.9):
\[c(S_{k_1} \times S_{k_2}) = \sum_{i=1}^{k_1+1} \sum_{j=1}^{k_2+1} 
(-1)^{k_1+k_2-i-j} (i+j-1) 
\binom{k_1+1}{i} \binom{k_2+1}{j} \binom{i+j-2}{i-1}.\]
We see that already in this situation checking $\cd(P) \geq (\dim(P)+3)/2$ turns out to be much simpler 
than computing $c(P)$. 
However, binomial identities of this flavour will come up in the proof of our main result.

\subsection{Organization of the paper}

This paper is organized as follows. In the second section we prove our main result Theorem \ref{main-thm}, which 
gives a non-trivial relation among the volumes of the faces of a simple lattice polytope $P$, if $\cd(P)>1$. 
In the third section we consider the conjecture of S. Di Rocco stating that $c(P) \geq 0$ for any lattice polytope $P$, which we 
confirm for lattice simplices. In the last section, we sketch some directions for future research.

\medskip

\textbf{Acknowledgments:} This research started during the workshop ``Combinatorial challenges in toric varieties'' 
(April 27 to May 1, 2009) 
at the American Institute of Mathematics (AIM) and was finished at the Mathematical Sciences Research Institute (MSRI), where both authors stayed 
as research members of the Tropical Geometry program (August 17 to December 18, 2009). 
We thank Sandra di Rocco for useful discussions.

\section{The main result}

Throughout the rest of the paper, let $n \geq 2$ and $0 \leq d \leq n$.

\subsection{The statement of the theorem}

Recall that an $n$-dimensional polytope $P$ is {\em simple}, if every vertex is contained in precisely $n$ facets. 
For instance, smooth polytopes are simple. Here is our main combinatorial result:

\begin{theorem}
Let $P$ be an $n$-dimensional simple polytope of degree $d < n$. 
\begin{enumerate}
\item[{\rm (i)}] If $d < n-d$ (equivalently, $d \leq \frac{n-1}{2}$, respectively $\cd(P) \geq \frac{n+3}{2}$), then
\[c(P)=0.\]
\item[{\rm (ii)}] If $d \geq n-d$, then
\[\sum_{j=0}^n (-1)^j \sum_{F \in \F(P)_j} ((n-d) h^*_{n-d}(F) + (j+1) \sum_{k=0}^{n-d-1} h^*_k(F)) = 0.\]
\end{enumerate}
\label{main-thm}
\end{theorem}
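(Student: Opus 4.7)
The plan is to reduce both parts to a single master identity obtained by expanding face volumes via $h^*$-coefficients and applying Stanley's reciprocity for the Ehrhart series. First, using $\Vol_\Z(F) = \sum_{k=0}^{\deg F} h^*_k(F)$, I rewrite
\[c(P) \;=\; \sum_{k \geq 0} T_k, \qquad T_k := \sum_{j=0}^n (-1)^{n-j}(j+1) \sum_{F \in \F(P)_j} h^*_k(F),\]
and (up to an overall sign) the identity in part (ii) becomes a relation between the unweighted sums $E_k := \sum_F (-1)^{\dim F} h^*_k(F)$ and the weighted sums $D_k := \sum_F (-1)^{\dim F}(\dim F + 1) h^*_k(F)$ of the $h^*$-coefficients over all nonempty faces.

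The main technical tool is Stanley's formula for the Ehrhart series, which one obtains by combining Ehrhart reciprocity with the decomposition $\ehr_P(m) = \sum_F \ehr_{\relint F}(m)$:
\[h^*_P(z) \;=\; (1-z)^{n+1} + \sum_{\emptyset \neq F \subseteq P} z^{\dim F + 1} h^*_F(1/z)\, (1-z)^{n - \dim F}.\]
I would substitute $z = 1 - \epsilon$ and compare Taylor coefficients. On the left-hand side, since $\deg P = d$, only the coefficients $h^*_k(P)$ with $k \leq d$ survive. On the right-hand side, each face $F$ of dimension $j$ contributes through the binomial expansion of $h^*_F(1/z)\,(1-z)^{n-j}$ near $z=1$: differentiating $(1-z)^{n-j}$ produces the factor $-(n-j)$, while differentiating $h^{**}_F(z) = z^{\dim F + 1} h^*_F(1/z)$ at $z=1$ produces $(\dim F + 1)\Vol_\Z(F) - (h^*_F)'(1)$. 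Combining the two sources of weights should yield exactly the expression $(n-d) h^*_{n-d}(F) + (\dim F + 1)\sum_{k<n-d} h^*_k(F)$ appearing in part (ii), the boundary term arising from the extremal order of vanishing of $(1-z)^{n-j}$.

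Part (i) is then deduced from part (ii) by invoking the bound $\deg F \leq \deg P = d$ for every face $F$ of the simple polytope $P$. This bound comes from the Minkowski-sum argument: an interior lattice point of $kF$, added to a lattice point in $P \setminus F$ along a transverse primitive ray (available at each vertex of $F$ by simplicity), produces an interior lattice point of $(k + n - \dim F)P$, whence $\cd P \leq \cd F + n - \dim F$, i.e., $\deg F \leq \deg P$. Under the hypothesis $d < n-d$, this forces $h^*_k(F) = 0$ for every $k \geq n-d$, so the boundary term $(n-d) h^*_{n-d}(F)$ drops out and $\sum_{k=0}^{n-d-1} h^*_k(F)$ equals the full $\Vol_\Z(F)$. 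The identity in (ii) then collapses to
\[\sum_{j=0}^n (-1)^j (j+1) \sum_{F \in \F(P)_j} \Vol_\Z(F) \;=\; 0,\]
which is $(-1)^n c(P) = 0$.

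The main obstacle is the precise coefficient extraction from Stanley's identity: the interaction of the Taylor expansion of $h^{**}_F(1-\epsilon)$ with the factor $\epsilon^{n-\dim F}$ produces a sum of binomial-weighted contributions, and matching these against the specific linear combination in part (ii) requires delicate identities of the flavor
\[\sum_l (-1)^l \binom{\dim F + 1 - l}{n-d}\, h^*_l(F) \;=\; (\text{combination of } h^*_{n-d}(F) \text{ and lower terms}),\]
where the simplicity hypothesis enters (plausibly through the $f$-vector Dehn–Sommerville relations for simple polytopes, invoked when one reindexes face sums by dimension). A secondary, more elementary challenge is the rigorous justification of the face-degree bound $\deg F \leq \deg P$, needed for the reduction of (i) to (ii); this I expect to be cleanly handled by a standalone lemma using the Minkowski-sum trick above.
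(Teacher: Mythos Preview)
Your high-level reduction---prove a single ``master identity'' of the shape of (ii) valid for all $d<n$, then specialize to obtain (i) when $d<n-d$---is exactly the logical structure of the paper's argument: there too both parts are deduced from one expression shown to vanish. Where you diverge is in \emph{how} that master expression is produced and shown to be zero, and this is where your sketch has a genuine gap.

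The paper does not use the reciprocity identity for $h^*_P$. Instead it writes down the explicit linear combination
\[
\sum_{p=d+1}^{n}\ \sum_{i=1}^{p-d} (-1)^{d-i}\, i\, \binom{p+1}{p-d-i}\ \sum_{G\in\F(P)_p} |(iG)^\circ\cap\Z^n|,
\]
observes that every summand vanishes because $\deg(G)\le d$ forces $\cd(G)>p-d$ (Stanley monotonicity), and then---using the simple-polytope inclusion--exclusion formula for $|(iG)^\circ|$ together with the face-incidence count $\#\{G\supseteq F:\dim G=p\}=\binom{n-j}{n-p}$, both of which \emph{require simplicity}---unwinds this zero into a double sum over all faces and all $k$ with coefficients that are alternating binomial sums. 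A Vandermonde-type convolution and a second identity (checked via Zeilberger's algorithm) collapse those coefficients to $j+1$, $n-d$, or $0$ according to whether $k<n-d$, $k=n-d$, or $k>n-d$, yielding (i) and (ii) simultaneously.

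Your proposed route is problematic for the following reason. The identity
\[
h^*_P(z)\;=\;(1-z)^{n+1}+\sum_{\emptyset\ne F\le P} z^{\dim F+1}\,h^*_F(1/z)\,(1-z)^{n-\dim F}
\]
holds for \emph{every} lattice polytope, simple or not (it is just the decomposition of $P$ into relatively open faces plus Ehrhart--Macdonald reciprocity). If a Taylor expansion at $z=1$ really extracted the identity (ii) from this formula alone, then your specialization argument would prove $c(P)=0$ whenever $\cd(P)\ge(n+3)/2$ for arbitrary lattice polytopes---but the paper states this implication as an open problem in its final section. Hence simplicity must enter in an essential way that your sketch does not locate; the parenthetical ``plausibly through the Dehn--Sommerville relations'' is not a substitute for saying which coefficient you extract and why the resulting combination of the $h^*_k(F)$ matches the specific weights $(j{+}1,\,n{-}d,\,0)$ in (ii). That matching is the entire content of the theorem, and in the paper it is what the Zeilberger-verified lemma provides.

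One minor point: the inequality $\deg F\le\deg P$ is Stanley's monotonicity theorem and holds for all lattice polytopes; you do not need simplicity or a Minkowski-sum argument for this step. Simplicity is needed elsewhere (inclusion--exclusion and the face-incidence count), not here.
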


The notation in the second statement is explained in the next subsection. 
The proof itself is given in Subsection~\ref{pr:main}. Since it involves 
basic Ehrhart theory and some identities of Binomial coefficients, we will 
discuss these topics first. The ideas of the proof are outlined in Subsection~\ref{pr:ideas}.

\subsection{Basics of Ehrhart theory}

Let us start with recalling some standard notions and results in Ehrhart theory, see \cite{BR06}. 

Let $P \subseteq \R^n$ be an $n$-dimensional lattice polytope. The {\em Ehrhart polynomial} $\ehr_P$ 
is given by the function ($k \in \N \mapsto |(k P) \cap \Z^n|$). 
It has a well-known rational generating function \cite{Ehr77, Sta80}:
\[\sum\limits_{k \geq 0} |(k P) \cap \Z^n| \, t^k =  \frac{h^*_0 + h^*_1 t + \cdots + h^*_n t^n}{(1-t)^{n+1}},\]
where $h^*_0, \ldots, h^*_n$ are non-negative integers 
satisfying $h^*_0 + \cdots + h^*_n =\Vol_\Z(P)$. The enumerator polynomial $h^*_P(t)$ is sometimes called the 
{\em $h^*$-polynomial} of $P$. Its degree (i.e., the maximal $k$ such that $h^*_k \not=0$) equals the 
degree of $P$, see \cite{BN07}. If $F$ is a face of $P$, then Stanley's monotonicity theorem \cite{Sta93} 
yields $\deg(F) \leq \deg(P)$, which we will use later on.

Let us also remark that switching between the Ehrhart polynomial and the $h^*$-polynomial 
is merely a linear transformation corresponding to the choice of a basis of binomial coefficient polynomials 
instead of a monomial basis:
\begin{equation}
\ehr_P(t) = \sum_{k=0}^{\deg(P)} h^*_k(P) \binom{t+n-k}{n}.
\label{h-st}
\end{equation}
The Ehrhart polynomial also allows to count the number of interior lattice points. 
For $k \in \N_{\geq 1}$ we have $|(k P)^\circ \cap \Z^n| = (-1)^n \ehr_P(-k)$, which is called Ehrhart reciprocity. 
In particular, it shows that counting the number of interior lattice points is also polynomial.

\subsection{Identities of binomial coefficients}


The core of the proof is a calculation involving alternating sums of products of binomial coefficients. 
It is interesting to note that similar formulas can be found in related work: 
naturally, when determining the degree of the $A$-discriminant in the smooth case 
(pp. 282-285 in \cite{GKZ94}), in explicit generalized Bott formulas for toric varieties \cite{Mat02}, 
and more recently in the computation of 
the $h$-vector of the regular triangulation of a hypersimplex in order to bound 
the $f$-vectors of the tight span \cite{HJ07}.

Let us recall the following convolution formula. A proof can be found in 
Gr\"unbaum's classical book on polytopes \cite[p. 149]{Gru03}. 
Interestingly, we found this reference in a recent paper about tropical intersection curves and Ehrhart theory 
\cite{ST09}.

\begin{lemma}
Let $0 \leq c \leq a$ and $0 \leq b$, then
\[\sum_{q=0}^c (-1)^q \binom{b}{q} \binom{a-q}{c-q} = \binom{a-b}{c}.\]
\label{binom}
\end{lemma}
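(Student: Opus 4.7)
The identity is a classical convolution, and I would give a short generating-function proof. The plan is to read both sides as coefficient extractions in the ring of formal power series $\R[[x]]$: write $\binom{a-q}{c-q} = [x^{c-q}](1+x)^{a-q}$, so that
\[\sum_{q=0}^c (-1)^q \binom{b}{q}\binom{a-q}{c-q} = [x^c]\sum_{q=0}^c \binom{b}{q}(-x)^q (1+x)^{a-q}.\]
Factoring $(1+x)^a$ out of the right-hand side turns the remaining sum into the binomial expansion of $\left(1-\frac{x}{1+x}\right)^b = (1+x)^{-b}$, where the hypothesis $b \geq 0$ makes the binomial theorem apply on the nose. The whole quantity then collapses to $[x^c](1+x)^{a-b} = \binom{a-b}{c}$, with the usual reading of $\binom{a-b}{c}$ as the generalized binomial coefficient in case $a-b<0$.

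An equally clean route is induction on $b$. The base case $b=0$ is immediate, and the induction step uses Pascal's identity $\binom{b}{q} = \binom{b-1}{q} + \binom{b-1}{q-1}$ to split the sum into two pieces; after the index shift $q \mapsto q+1$ in the second piece, the inductive hypothesis applied to the triples $(a,b-1,c)$ and $(a-1,b-1,c-1)$ produces
\[\binom{a-(b-1)}{c} - \binom{(a-1)-(b-1)}{c-1} = \binom{a-b+1}{c} - \binom{a-b}{c-1},\]
and one last application of Pascal gives $\binom{a-b}{c}$.

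I do not foresee any real obstacle: the identity is entirely standard, and the work is purely bookkeeping with signs and index shifts. The only mild subtlety is that $a-b$ may be negative, but both approaches accommodate this directly, the first because $(1+x)^{a-b}$ is always a well-defined element of $\R[[x]]$, the second because Pascal's identity is valid for arbitrary upper arguments.
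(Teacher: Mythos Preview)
Your proposal is correct. Both arguments you sketch are valid and complete: in the generating-function approach the key point, which you handle correctly, is that extending the sum past $q=c$ is harmless because each term $\binom{b}{q}(-x)^q(1+x)^{a-q}$ has $x$-order at least $q$ and hence contributes nothing to $[x^c]$; the identification $1-\tfrac{x}{1+x}=(1+x)^{-1}$ then finishes the job. The induction on $b$ is equally clean, and your remark that Pascal's identity (and the generalized binomial coefficient) remain valid for possibly negative upper argument $a-b$ covers the one point that needs care.

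As for comparison with the paper: the paper does not prove this lemma at all, but simply cites Gr\"unbaum's \emph{Convex Polytopes} \cite[p.~149]{Gru03} for a proof. Your write-up is therefore strictly more self-contained than what appears in the paper, and either of your two approaches would serve as a suitable replacement for the bare citation.
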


Here is the main lemma. In contrast to the other above cited papers, 
we do not (yet) provide a direct proof, instead we verify this identity using the amazing Zeilberger's algorithm, 
for which we refer to the ``$A=B$'' book \cite{PWZ96}.

\begin{lemma}
For $k \in \{0, \ldots, d\}$ and $j \in \{k, \ldots, n\}$ we have the following identities:
\[\sum_{i=0}^{n-d} (-1)^{n-d-i} i \binom{i+j-k}{j} \binom{j+1}{n-d-i} 
= \left\{\begin{array}{rcl}j+1 & , & k < n-d\\n-d & , & k=n-d\\0 &, & k > n-d\end{array}\right.\]
\label{formulas}
\end{lemma}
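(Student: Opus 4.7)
The plan is to reduce the identity to two applications of the convolution formula in Lemma~\ref{binom}. Write $m := n-d$ for brevity and let $S$ denote the left-hand side. First, I would dispose of the case $k > n-d$ at once: since $j \ge k$, the inequality $0 \le i+j-k < j$ holds for every $0 \le i < k$, so $\binom{i+j-k}{j} = 0$ whenever $i < k$. If $k > m$, this wipes out every term of the sum and gives $S = 0$.

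For $k \le m$, substitute $i = k+\ell$ with $\ell \in \{0,\ldots,m' := m-k\}$, split the factor $i$ as $k+\ell$, and write $S = k\,T_1 + T_2$ with
\[ T_1 := \sum_{\ell=0}^{m'} (-1)^{m'-\ell} \binom{\ell+j}{\ell} \binom{j+1}{m'-\ell}, \qquad T_2 := \sum_{\ell=0}^{m'} (-1)^{m'-\ell}\, \ell \binom{\ell+j}{\ell} \binom{j+1}{m'-\ell}. \]
Reversing the order of summation in $T_1$ via $q := m'-\ell$ puts it into the shape of Lemma~\ref{binom} with parameters $a = m'+j$, $b = j+1$, $c = m'$, whence $T_1 = \binom{m'-1}{m'}$. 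This evaluates to $1$ when $m' = 0$ (directly from the single surviving term) and to $0$ when $m' \ge 1$. For $T_2$ I would apply the elementary identity $\ell\binom{\ell+j}{\ell} = (j+1)\binom{\ell+j}{\ell-1}$ to extract a factor of $j+1$, then shift $\ell \mapsto \ell-1$ and reverse the order of summation once more; Lemma~\ref{binom} now applies with $c = m'-1$ and yields $T_2 = (j+1)\binom{m'-1}{m'-1} = j+1$ for $m' \ge 1$, while $T_2 = 0$ trivially when $m' = 0$ (the only surviving term carries the factor $\ell = 0$).

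Assembling the pieces recovers the three cases: when $k < n-d$ one has $m' \ge 1$, so $T_1 = 0$ and $T_2 = j+1$, giving $S = j+1$; when $k = n-d$ one has $m' = 0$, so $T_1 = 1$ and $T_2 = 0$, giving $S = k = n-d$; and $k > n-d$ was already handled. The main obstacle is purely clerical: the two index reversals, the alternating sign $(-1)^{m'-\ell}$, and the factor-$\ell$ manipulation must all be carefully tracked so that each of $T_1$ and $T_2$ is brought into exactly the form demanded by Lemma~\ref{binom}. No appeal to Zeilberger's algorithm is needed.
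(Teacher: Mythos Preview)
Your argument is correct. The case $k>n-d$ is handled cleanly; for $k\le n-d$ the substitution $i=k+\ell$, the split $S=kT_1+T_2$, and the two applications of Lemma~\ref{binom} all check out (with $a=m'+j$, $b=j+1$, and $c=m'$ respectively $c=m'-1$), as does the identity $\ell\binom{\ell+j}{\ell}=(j+1)\binom{\ell+j}{\ell-1}$ used to extract the factor $j+1$ from $T_2$.

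Your route is genuinely different from the paper's. The paper does \emph{not} give a direct proof: it feeds the sum into Zeilberger's algorithm, obtains a creative-telescoping certificate showing $f(k)=f(k+1)$ for $k\notin\{n-d-1,n-d\}$, and then runs the algorithm three more times (on $f(0)$, $f(n-d)$, $f(n-d+1)$ as functions of $j$) to pin down the three values. The authors explicitly note that they ``do not (yet) provide a direct proof''. Your argument supplies exactly that: by stripping the vanishing terms, splitting off the factor $k$, and reducing each piece to the convolution Lemma~\ref{binom}, you get a short, self-contained, human-verifiable proof that explains \emph{why} the three cases arise (namely, from whether $m'=m-k$ is zero, positive, or undefined). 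The paper's approach has the advantage of being mechanical and guaranteed to terminate; yours has the advantage of insight and of removing the dependence on a computer-algebra certificate.
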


\begin{proof}
For $a := n-d \geq 1$, let us define the function
\[f(k) := \sum_{i=0}^\infty F(k,i),\]
where
\[F(k,i) := (-1)^{a-i} \, i \binom{i+j-k}{j} \binom{j+1}{a-i}.\]
Let us show
\begin{equation}
f(k+1) = f(k), \text{ if } k \not\in \{a-1,a\}.
\label{claim}
\end{equation}

To see this claim, we apply the function \texttt{zeil} of the \texttt{maple}-package \texttt{EKHAD8}. This returns 
the following equation:
\begin{equation}
(a - k) (a - 1 - k) (j + 1) (F(k,i) - F(k+1,i)) = G(k,i+1) - G(k,i),
\label{eq1}
\end{equation}
where $G(k,i) := F(k,i) \cdot R(k,i)$ for the rational function
\[R(k,i) := (-j - 1 + a - i) (i-k) (k a - i k j + a j i - k i - a j)/((i + j - k) i).\]
We remark that the reader can verify the algebraic equation 
(\ref{eq1}) directly after dividing both sides by $F(k,i)$. Now, summing over $i$ on both sides of Equation (\ref{eq1}) yields 
\begin{equation}
(a - k) (a -1 - k) (j + 1) (f(k)-f(k+1)) = -G(k,0),
\label{eq2}
\end{equation}
where
\[-G(k,0) = (-1)^{a+1} \binom{j-k}{j} \binom{j+1}{a} (-j - 1 + a) k a.\]
Using our assumption $j-k \geq 0$ we see that $-G(k,0) = 0$. 
Hence, Equation (\ref{eq2}) yields that $f(k)-f(k+1) = 0$, if $a-k \not=0$ and $a-1-k \not=0$, which proves our claim (\ref{claim}). 
Let us distinguish three cases:

\begin{enumerate}
\item[{\rm (i)}] Let $k < a$. In this case, (\ref{claim}) yields $f(k) = f(k-1) = \cdots = f(0)$. Now, we apply $\texttt{zeil}$ again 
on $f(0)$ as a function in $j$:
\[f_1(j) := \sum_{i=0}^\infty (-1)^{a-i} \, i \binom{i+j}{j} \binom{j+1}{a-i}.\]
In the same way as above we get the recurrence equation 
\[(-j-2) f_1(j) + (j+1) f_1(j+1) = 0.\]
Since $j \mapsto j+1$ satisfies the same recurrence equation and 
the initial values $f_1(0) = a - (a-1) = 1$ coincide, this shows $f(0) = f_1(j) = j+1$, as desired.
\item[{\rm (ii)}] Let $k = a$. Here we regard $f(a)$ as a function in $j$:
\[f_2(j) := \sum_{i=0}^\infty (-1)^{a-i} \, i \binom{i+j-a}{j} \binom{j+1}{a-i}.\]
Again by \texttt{zeil} we get that $f_2(j)-2 f_2(j+1)+f_2(j+2)$ equals
\[\frac{(-1)^{a+1} \binom{j-a}{j} \binom{j+1}{a} a (-2 aj+a^2-4a+j^2+4j+3)}{(j+1)(a-j-2)}.\]
Since $a=k\leq j$, this expression evaluates to $0$. Therefore, $f_2$ satisfies for $j \geq a$ 
the same recursion as a constant function. Now, we only have to observe that 
$f_2(a) = a$ and $f_2(a+1) = a$ to get $f(a) = f_2(j) = a$.
\item[{\rm (iii)}] Let $k > a$. In this case, (\ref{claim}) yields $f(k) = f(k-1) = \cdots = f(a+1)$. 
Now, we apply $\texttt{zeil}$ again on $f(a+1)$ as a function in $j$:
\[f_3(j) := \sum_{i=0}^\infty (-1)^{a-i} i \binom{i+j-a-1}{j} \binom{j+1}{a-i}.\]
As above, we get $(-j-2) f_3(j) + (j+1) f_3(j+1) = 0$ for $j \geq k=a+1$. 
Again, it remains to observe that $f_3(a+1) = 0$.
\end{enumerate}
\end{proof}

\subsection{The idea of the proof}

\label{pr:ideas}

Let us give an outline of the proof that $\cd(P) \geq (n+3)/2$ implies $c(P) = 0$. 
By definition, conditions on the codegree of $P$ translate into the vanishing 
of certain values of the Ehrhart polynomial of $P$. Using a well-known inclusion-exclusion formula 
for simple polytopes (e.g., see Exercise 5.9 in \cite{BR06}) 
we get the following equations:
\begin{equation}\label{eq:vanishing}
0 = |(k P) ^\circ \cap \Z^n| = \sum_{j=0}^n (-1)^j \sum_{\dim F=j} |(k F) \cap \Z^n | \quad \forall\; k = 1, \dots, \lceil\frac{n+1}{2}\rceil,
\end{equation}
where the sum is over all faces $F$ of $P$. One motivation to use these formulas 
is their interpretation as the dimensions of the cohomology
groups $H^0(X, \Omega_X^n (kL))$ ($k = 1, \dots, \lceil\frac{n+1}{2}\rceil$), see \cite{Mat02}, which is also 
valid for $\Q$-factorial toric varieties. 

Now, we expand these linear equations in terms of the coefficients of the $h^*$-polynomial. 
In order to have as much linear equations as possible 
we consider all the faces of $P$ which have interior lattice points. 
On the other hand, we also observe that $c(P)$ can be easily expressed as an expression which is linear 
in terms of the coefficients of the $h^*$-polynomials. 
So, the theorem would follow, if it would turn out that 
$c(P)$ is a linear combination of the equations we started with. Amazingly, this is true. 
Indeed, the proof is a purely formal combinatorial argument, which does not involve 
any (non-trivial) geometry. The lucky part is to come up with the right (binomial) coefficients, 
which we guessed based upon low-dimensional experiments. 
We do not know yet of a more insightful and systematic way to prove our result.

\subsection{Proof of Theorem~\ref{th:main}}

\label{pr:main}

We are going to show that both  expressions in the statement of the theorem are actually equal to
\begin{equation}
\sum_{p=d+1}^n \sum_{i=1}^{p-d} (-1)^{d-i} i \binom{p+1}{p-d-i} \left(\sum_{G \in \F(P)_p} 
|(i G)^\circ \cap \Z^n|\right).
\label{expr-proof}
\end{equation}

Let us first note that this is indeed zero, as desired: 
If $G \in \F(P)_p$ for $p > d$, then 
Stanley's monotonicity theorem implies $1 \leq p-d \leq p-\deg(G) = \cd(G)-1$. 
In particular, $|(i G)^\circ \cap \Z^n| = 0$ for any $i = 1, \ldots, p-d$, hence, the formula vanishes.

Let us now fix $p \in \{d+1, \ldots, n\}$ and $i \in \{1, \ldots, p-d\}$. 
Let $G \in \F(P)_p$. The inclusion-exclusion-formula for simple polytopes yields:
\[\sum_{G \in \F(P)_p} |(i G)^\circ \cap \Z^n| = 
\sum_{G \in \F(P)_p} \left(\sum_{j=0}^p (-1)^{p-j} \sum_{F \in \F(G)_j} \ehr_F(i)\right).\]
Moreover, since $P$ is simple, any face $F$ of $P$ of dimension $j \leq p$ 
is contained in precisely $\binom{n-j}{n-p}$ faces $G$ of $P$ of dimension $p$. This yields
\[\sum_{G \in \F(P)_p} |(i G)^\circ \cap \Z^n| = \sum_{j=0}^p (-1)^{p-j} \sum_{F \in \F(P)_j} \binom{n-j}{n-p} \ehr_F(i).\]
Applying Equation (\ref{h-st}) to $F \in \F(P)_j$ and noting 
that $\deg(F) \leq \min(j,d)$ by Stanley's monotonicity theorem we get:
\[\sum_{G \in \F(P)_p} |(i G)^\circ \cap \Z^n| 
= \sum_{j=0}^p (-1)^{p-j} \sum_{F \in \F(P)_j} \binom{n-j}{n-p} \left(\sum_{k=0}^{\min(j,d)} h^*_k(F) \binom{i+j-k}{j}\right)\]
\[= \sum_{j=0}^n \sum_{k=0}^{\min(j,d)} 
(-1)^{p-j} \binom{n-j}{n-p} \binom{i+j-k}{j} \left(\sum_{F \in \F(P)_j} h^*_k(F)\right).\] 

Hence, we conclude that Equation (\ref{expr-proof}) equals
\[\sum_{p=d+1}^n \sum_{i=1}^{p-d} (-1)^{d-i} i \binom{p+1}{p-d-i} 
\left(\sum_{j=0}^n \sum_{k=0}^{\min(j,d)} (-1)^{p-j} \binom{n-j}{n-p} \binom{i+j-k}{j} 
\left(\sum_{F \in \F(P)_j} h^*_k(F)\right)\right)\]
\[\sum_{j=0}^n (-1)^j \sum_{k=0}^{\min(j,d)} \left(\sum_{p=d+1}^n \sum_{i=1}^{p-d}(-1)^{p-d-i} i \binom{p+1}{p-d-i} 
\binom{n-j}{n-p} \binom{i+j-k}{j} \right) \left(\sum_{F \in \F(P)_j} h^*_k(F) \right).\]
Let us simplify the expression in the middle bracket:
\[\sum_{p=d+1}^n \sum_{i=1}^{p-d} (-1)^{p-d-i} i \binom{p+1}{p-d-i} \binom{n-j}{n-p} \binom{i+j-k}{j}\]
\[= \sum_{i=0}^{n-d} (-1)^{d-i} i \binom{i+j-k}{j} \left(\sum_{p=d+i}^n (-1)^{p} 
\binom{n-j}{n-p} \binom{p+1}{p-d-i} \right)\]
\[= \sum_{i=0}^{n-d} (-1)^{n-d-i} i \binom{i+j-k}{j} \left(\sum_{q=0}^{n-d-i} (-1)^q 
\binom{n-j}{q} \binom{n+1-q}{n-d-i-q} \right)\]
\begin{equation}
= \sum_{i=0}^{n-d} (-1)^{n-d-i} i \binom{i+j-k}{j} \binom{j+1}{n-d-i},
\label{last}
\end{equation}
where in the last step we used Lemma \ref{binom}, note $n-d-i\geq n-d-(n-d) = 0$. 
Now, we can apply Lemma \ref{formulas}. Let us distinguish the two cases of the theorem:
\begin{enumerate}
\item[{\rm (i)}] If $d < n-d$, then $k \leq \min(j,d) < n-d$, hence expression (\ref{last}) simplifies to $j+1$ by Lemma \ref{formulas}. 
Therefore, Equation (\ref{expr-proof}) equals
\[\sum_{j=0}^n (-1)^j \sum_{k=0}^{\min(j,d)} (j+1) \sum_{F \in \F(P)_j} h^*_k(F) = c(P),\]
where we used $\sum_{k=0}^{\min(j,d)} h^*_k(F) = \Vol_\Z(F)$.
\item[{\rm (ii)}] If $d \geq n-d$, then expression (\ref{last}) simplifies according to the three different ranges of $k$ 
in Lemma \ref{formulas}. Therefore, Equation (\ref{expr-proof}) equals
\[\left(\sum_{j=0}^n (-1)^j \sum_{k=0}^{n-d-1} (j+1) \sum_{F \in \F(P)_j} h^*_k(F)\right) 
+ \sum_{j=0}^n (-1)^j (n-d) \sum_{F \in \F(P)_j} h^*_{n-d}(F).\]
\end{enumerate}

$\hfill$$\qed$

\section{The non-negativity conjecture}
\label{sec:2}

In \cite{DR06} the following conjecture was formulated:

\begin{conjecture}[Di Rocco]{\rm
Let $P \subset \R^n$ be an $n$-dimensional lattice polytope. 
Then $c(P) \geq 0$.}
\label{sandra-conj}
\end{conjecture}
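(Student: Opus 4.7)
The plan is to confirm the conjecture for lattice simplices, the stated goal of Section~\ref{sec:2}. I would use the half-open fundamental parallelepiped of the cone over $P$ to express $c(P)$ as a manifestly non-negative integer. Let $P = \conv(v_0, \dots, v_n)$ be a lattice $n$-simplex. Lifting $u_i := (v_i, 1) \in \Z^{n+1}$, form the cone $C(P) := \pos(u_0, \dots, u_n)$ with half-open fundamental parallelepiped $\Pi := \{\sum_i t_i u_i : 0 \leq t_i < 1\}$. Each $p \in \Pi \cap \Z^{n+1}$ admits a unique such representation, defining its \emph{support} $T(p) := \{i : t_i > 0\} \subseteq \{0, \dots, n\}$.

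The first key observation is that for every face $F_S := \conv(v_i : i \in S)$, its own fundamental parallelepiped (in the induced sublattice) embeds inside $\Pi$ as precisely the subset of lattice points with $T(p) \subseteq S$, so
\[
\Vol_\Z(F_S) = |\{p \in \Pi \cap \Z^{n+1} : T(p) \subseteq S\}|.
\]
Substituting this into the defining formula for $c(P)$ and interchanging the order of summation---using that the number of $m$-subsets $S$ containing a fixed $T(p)$ equals $\binom{n+1-|T(p)|}{m-|T(p)|}$---gives
\[
c(P) = \sum_{p \in \Pi \cap \Z^{n+1}}\; \sum_{m = |T(p)|}^{n+1} (-1)^{n+1-m}\, m\, \binom{n+1 - |T(p)|}{m - |T(p)|}.
\]

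The next step is to evaluate the inner sum, which depends only on $t := |T(p)|$. Setting $s := n+1-t$ and using the standard vanishing identities $\sum_q (-1)^q \binom{s}{q} = 0$ and $\sum_q (-1)^q q \binom{s}{q} = 0$ for $s \geq 2$, a short calculation shows that the inner sum equals $0$ for $t \leq n-1$, equals $1$ for $t = n$, and equals $n+1$ for $t = n+1$. Setting $N_t := |\{p \in \Pi \cap \Z^{n+1} : |T(p)| = t\}|$, this yields the clean identity
\[
c(P) \;=\; N_n \;+\; (n+1)\, N_{n+1},
\]
which is manifestly non-negative and vanishes precisely when no lattice point of $\Pi$ has support of size $n$ or $n+1$.

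I do not anticipate serious obstacles in the simplex case; the argument is a direct bookkeeping in which the parallelepiped of $P$ itself organizes the volume data of all faces at once. The hard part will be extending to general lattice polytopes: the clean correspondence between $\Vol_\Z(F_S)$ and lattice points of a single global parallelepiped is specific to simplices. For the general conjecture one would presumably need either a more intrinsic geometric interpretation of $c(P)$ as a non-negative Euler-characteristic-type invariant on the associated toric variety, or a reduction to the simplex case via a regular unimodular triangulation with careful control of the correction terms.
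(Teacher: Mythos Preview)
Your proposal is correct and essentially identical to the paper's proof of Proposition~\ref{simplex}: both decompose the face volumes via lattice points of the half-open fundamental parallelepiped over $P$ indexed by their support, interchange summation, and arrive at the same closed formula $c(P) = N_n + (n+1)\,N_{n+1}$ (which the paper writes as $\sum_{I}\binom{|I|}{n}\,s_I$). The only cosmetic difference is that the paper evaluates the inner binomial sum via the convolution identity of Lemma~\ref{binom}, whereas you use the elementary vanishing of $\sum_q (-1)^q\binom{s}{q}$ and $\sum_q (-1)^q q\binom{s}{q}$ directly.
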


This is not at all obvious or something one would naturally expect from an algebro-geometric viewpoint. 
While in the case of a smooth polytope, $c(P)$ equals the degree of the associated $A$-discriminant $\Delta_A$, 
in the general case $c(P)$ is the degree of a homogeneous rational function $D_A$, called the 
{\em regular $A$-determinant}, see Chapter 11 of \cite{GKZ94}, which does not has to be a polynomial. The
rational function $D_A$ equals the alternating product of the principal determinants
associated to all the facial subsets of $A$.
In Example 2.5 of Chapter 11 of \cite{GKZ94}, an example of a non-simple 
lattice polytope (the hypersimplex $\Delta(3,6)$) is given such that $D_A$ is not a polynomial. Still, Di Rocco 
calculated in \cite{DR06} that $c(\Delta(3,6)) > 0$.

If $P$ is simple and the associated polarized toric variety is {\em quasi-smooth},
it is shown in Theorem 1.6 of Chapter 11 of \cite{GKZ94} 
that $D_A$ is a polynomial (not necessarily equal to $\Delta_A$), so in particular its degree $c(P) \ge 0$.
Here, being quasi-smooth means that 
$i(F,A) = 1$ holds for any face $F$ of $P$, where $i(F,A)$ is the index of the affine lattice 
generated by the lattice points in $F$  with respect to the affine lattice generated by all lattice points in the affine hull of $F$. This condition holds, for instance, for 
any simple polytope which defines a very ample line bundle on the associated $\Q$-factorial toric variety. 
However, Conjecture~\ref{sandra-conj} seems still to be open even for simple polytopes. 

In the following proposition, we verify Conjecture~\ref{sandra-conj} 
for lattice simplices. Note that this case is indeed not covered by the results on quasi-smoothness. 
Starting from dimension four, there are $n$-dimensional lattice simplices that 
are not quasi-smooth: as an example take the convex hull $P$ of 
$e_4$, $e_1 + e_4$, $e_2 + e_4$, $e_1+e_2+2 e_3+e_4$, $-e_1-e_2-e_3-2 e_4$ in $\R^4$, where $e_1, \ldots, e_4$ is a 
lattice basis of $\Z^4$. In this case, the facet $F$ formed by the first four vertices satisfies $i(F,A) = 2$, however we computed that $c(P)  = 21$.

\begin{proposition}\label{simplex}
Let $P \subseteq \R^n$ be an $n$-dimensional lattice simplex. Then
\[c(P) \geq 0.\]
\end{proposition}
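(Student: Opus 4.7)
The plan is to express $c(P)$ as a manifestly non-negative combination of lattice point counts in the fundamental half-open parallelepiped of $P$, exploiting that the face lattice of a simplex is the Boolean lattice on its vertices. Write $v_0,\dots,v_n$ for the vertices of $P$, let $\tilde v_i := (v_i,1)\in\Z^{n+1}$, and set
\[\Pi_P := \Bigl\{\sum_{i=0}^n \lambda_i \tilde v_i : 0\le \lambda_i < 1\Bigr\}.\]
For $S\subseteq\{0,\dots,n\}$ of size $j+1$ and $F_S := \conv(v_i : i\in S)$, the standard Ehrhart-theoretic identity for lattice simplices yields
\[\Vol_\Z(F_S) \;=\; |\{z\in\Pi_P\cap\Z^{n+1} : \mathrm{supp}(z)\subseteq S\}|,\]
where $\mathrm{supp}(z)\subseteq\{0,\dots,n\}$ is the set of indices with $\lambda_i>0$ in the unique representation $z=\sum\lambda_i\tilde v_i$.

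Next I substitute this into the definition of $c(P)$ and swap the order of summation: a fixed $z$ of support size $s(z)$ lies in exactly $\binom{n+1-s(z)}{j+1-s(z)}$ subsets $S$ of size $j+1$, so that setting $l:=j+1-s(z)$ gives
\[c(P) \;=\; \sum_{z\in\Pi_P\cap\Z^{n+1}} (-1)^{n-s(z)+1}\sum_{l=0}^{n+1-s(z)}(-1)^l (l+s(z))\binom{n+1-s(z)}{l}.\]
The inner sum is evaluated by the elementary identities $\sum_l(-1)^l\binom{N}{l}=[N=0]$ and $\sum_l(-1)^l l\binom{N}{l}=-[N=1]$: with $N:=n+1-s(z)$, it equals $n+1$ if $s(z)=n+1$, equals $1$ if $s(z)=n$, and vanishes otherwise. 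Combining the signs yields
\[c(P) \;=\; (n+1)\cdot|\{z\in\Pi_P\cap\Z^{n+1}: s(z)=n+1\}|\;+\;|\{z\in\Pi_P\cap\Z^{n+1}: s(z)=n\}|,\]
which is a sum of non-negative terms, as desired.

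The only genuine subtlety is the uniform identity $\Vol_\Z(F_S)=|\{z\in\Pi_P\cap\Z^{n+1}: \mathrm{supp}(z)\subseteq S\}|$, whose justification rests on observing that $(x,1)\mapsto x$ identifies $(\aff(F_S)\times\{1\})\cap\Z^{n+1}$ with $\aff(F_S)\cap\Z^n$, so that the sub-parallelepiped associated with $F_S$ computes exactly the normalized volume with respect to the correct affine lattice on $\aff(F_S)$. Once this is set up, the rest is a routine alternating-sum computation, and the resulting formula gives an explicit combinatorial meaning to $c(P)$ rather than merely its sign.
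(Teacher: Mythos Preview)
Your proof is correct and follows essentially the same approach as the paper's: both lift to $\R^{n+1}$, stratify lattice points in the half-open parallelepiped by their support, swap the order of summation, and evaluate the resulting alternating binomial sum to obtain exactly the formula $c(P)=(n+1)\,s_{[n+1]}+\sum_{|I|=n} s_I$. The only cosmetic differences are that the paper groups points by their common support set $I$ (introducing the counts $s_I$) and invokes its convolution Lemma~\ref{binom}, whereas you sum over individual points and use the elementary identities $\sum_l(-1)^l\binom{N}{l}=[N=0]$ and $\sum_l(-1)^l l\binom{N}{l}=-[N=1]$ directly.
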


\begin{proof}
In the case of a lattice simplex, let us identify $P \subset \R^n$ with 
$P \times \{1\} \subset \R^{n+1}$. Let $v_0, \ldots, v_n$ 
be the vertices of $P$. We identify the faces of $P$ of dimension $j$ with subsets of $[n+1] := \{0, \ldots, n\}$ 
of size $j+1$. As is well-known, the coefficients of the $h^*$-polynomial 
count lattice points in the half-open paralleliped spanned by $P$, see Corollary 3.11 in \cite{BR06}. 
Let $k \in \{0, \ldots, n\}$:
\[h^*_k = |\{x \in \Z^{n+1} \,:\, x = \sum_{i \in [n+1]} \lambda_i v_i,\; 0 \leq \lambda_i < 1,\; 
\sum_{i=0}^n \lambda_i = k\}|.\]
Let us define $s_\emptyset := 1$, and for $\emptyset \not= I \subseteq [n+1]$:
\[s_I := |\{x \in \Z^{n+1} \,:\, x = \sum_{i \in I} \lambda_i (v_i, 1),\; 0 < \lambda_i < 1\}|.\]
Note that 
\[\Vol_\Z(P) = |\{x \in \Z^{n+1} \,:\, x = \sum_{i \in [n+1]} \lambda_i (v_i, 1),\; 0 \leq \lambda_i < 1\} 
= \sum_{I \subseteq [n+1]} s_I.\]
Hence,
\[c(P) = \sum_{j=0}^n (-1)^{n-j} (j+1) \sum_{F \subseteq [n+1], |F| = j+1} \left(\sum_{I \subseteq F} s_I\right).\]
Counting the number of face-inclusions as in the previous proof we get
\[c(P) = (-1)^n \sum_{I \subseteq [n+1]} 
\left(\sum_{j=|I|-1}^n (-1)^j (j+1) \binom{n+1-|I|}{n-j}\right) s_I\]
Let us compute the expression in the bracket using Lemma \ref{binom}:
\[\sum_{j=0}^n (-1)^j (j+1) \binom{n+1-|I|}{n-j} = 
(-1)^n \sum_{q=0}^n (-1)^q \binom{n+1-|I|}{q} \binom{n+1-q}{n-q}\]
\[=(-1)^n \binom{n+1-(n+1-|I|)}{n} = (-1)^n \binom{|I|}{n}.\]
This proves:
\begin{equation}
c(P) = \sum_{I \subseteq [n+1]} \binom{|I|}{n} s_I \geq 0.
\label{noni}
\end{equation}
\end{proof}

\begin{remark}{\rm In the situation of the proof, we can give a more insightful way to see that 
$\cd(P) \geq \frac{n+3}{2}$ implies $c(P) = 0$. For this, observe that Equation (\ref{noni}) implies
\[c(P) = (n+1) s_{[n+1]} + \sum_{I \subseteq [n+1], |I| = n} s_I.\]
In particular, 
\[c(P) = 0 \quad\lolra\quad s_I = 0 \quad\forall\; I \subseteq [n+1], |I| \geq n.\]
By Lemma 2.2 in \cite{Nil08} we have $s_I = 0$, if $|I| > 2 \deg(P)$. Hence, we get
\[\cd(P) \geq \frac{n+3}{2} \lolra n >  2 \deg(P) \lora c(P) = 0.\]
Here is an open question: Does in this case $c(P) =0$ imply that $P$ is a lattice pyramid? This would sharpen the main result in \cite{Nil08}.
}
\end{remark}

One might also wonder whether one could extend the proof of Proposition~\ref{simplex} 
to all lattice polytopes via lattice triangulations. This has been a successful approach to many problems 
in Ehrhart theory, see \cite{BM85, Pay08}.

\section{Directions for future research}

As the reader may have have noticed, we do not have a purely combinatorial proof 
of Corollary~\ref{smoothy}, since we rely on the algebro-geometric 
results in \cite{DR06}. Is there a more direct combinatorial way to deduce the
Cayley structure? 
Moreover, we have to leave the obvious question open, if any of the implications 
\[\cd(P) \geq (n+3)/2 \;\lora\; c(P) =0 \;\lora\; P \text{ Cayley polytope}\]
still hold for arbitrary $n$-dimensional lattice polytopes, cf. Conjecture 4.5 in \cite{DR06}.

A more natural idea is to expect that the following conjecture holds:

\begin{conjecture} {\rm Let $P \subset \R^n$ be an $n$-dimensional lattice polytope. 
with $\cd(P)\geq \frac{n+3}{2}$ (or equivalently, $n > 2 \deg(P)$). Then,
$\deg(\Delta_A) = 0$, that is $X_A$ is dual defect.}
\end{conjecture}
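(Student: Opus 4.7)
The conjecture extends Theorem~\ref{th:main} from smooth to arbitrary lattice polytopes. The fundamental difficulty is that for non-smooth $P$ the combinatorial invariant $c(P)$ no longer computes $\deg(\Delta_A)$ but only the degree of the regular $A$-determinant $D_A$, which can be a non-polynomial rational function. Hence Theorem~\ref{main-thm}(i) is only half of what is needed: the vanishing $c(P)=0$ has to be upgraded to the vanishing of the actual discriminant degree.

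The route I would try first is to combine Theorem~\ref{main-thm} with one of the face-stratified formulas for $\deg(\Delta_A)$ available in the singular case, due to Cattani--Curran, Dickenstein--Feichtner--Sturmfels, Esterov, and Matsui--Takeuchi (\cite{CC07, DFS07, Est08, MT08}). Each of these expresses $\deg(\Delta_A)$ as an alternating sum over faces $F \in \F(P)_j$ of a product of $\Vol_\Z(F)$ (or a closely related lattice volume) and an Euler-obstruction-type coefficient that depends only on the local geometry of $P$ along $F$. I would then try to run the template of Subsection~\ref{pr:main}: translate the hypothesis $\cd(P) \geq (n+3)/2$ into the vanishings $|(kP)^\circ \cap \Z^n|=0$ for $1 \leq k \leq \up{(n+1)/2}$, apply inclusion-exclusion over higher-dimensional faces $G \in \F(P)_p$ (with appropriate multiplicities replacing the simple-polytope count $\binom{n-j}{n-p}$), expand via the $h^*$-polynomials of faces using Stanley monotonicity, and attempt to telescope the resulting triple sum via a suitably weighted variant of Lemma~\ref{formulas}.

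The main obstacle is precisely whether these Euler-obstruction weights collapse under the binomial identity of Lemma~\ref{formulas}. In the simple case they reduce to the clean factor $(j+1)$ that drives the sharp cancellation; for non-simple $P$ they are subtler invariants of the normal fan, and there is no \emph{a priori} reason they are compatible with the same identity. A sensible first test is the lattice simplex case, extending Proposition~\ref{simplex}: the half-open parallelepiped decomposition is still available, and the explicit combinatorial control might already reveal whether the pattern survives or breaks. A complementary strategy would be to pass to a projective simplicial refinement $Y \to X_A$ of the normal fan, which does not change $P$ and therefore preserves $\cd(P)$, apply Theorem~\ref{main-thm}(i) to $Y$, and attempt to transfer dual-defectness through the birational morphism; but dual defect is not a birational invariant, so this would require careful control of characteristic cycles along the exceptional locus. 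I expect any clean proof to hinge on replacing the simplicity assumption used in the derivation of Lemma~\ref{formulas} by the correct singular-case weighting.
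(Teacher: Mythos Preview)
The statement is a \emph{conjecture}, not a theorem: the paper does not prove it. It appears in the final section (``Directions for future research'') precisely as an open problem, accompanied only by the remark that the authors ``believe that there should be a combinatorial proof in the singular case in a similar fashion as the proof above, using the purely combinatorial formulas for the degree of the $A$-discriminant'' from \cite{Est08, MT08}. There is therefore no proof in the paper to compare your attempt against.

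Your proposal is likewise not a proof but a strategy sketch, and you are candid about this (``I would try'', ``The main obstacle is'', ``I expect any clean proof to hinge on''). The route you outline --- replace the smooth-case identity $\deg(\Delta_A)=c(P)$ by a face-stratified Euler-obstruction formula from \cite{CC07, DFS07, Est08, MT08} and then rerun the template of Subsection~\ref{pr:main} --- is exactly the line the paper itself suggests. You go somewhat further than the paper in naming the concrete obstacles: the face-count $\binom{n-j}{n-p}$ is specific to simple polytopes and must be replaced, and the Euler-obstruction weights may not collapse under Lemma~\ref{formulas} the way the clean factor $j+1$ does. One small slip: your opening paragraph speaks as though Theorem~\ref{main-thm}(i) already gives $c(P)=0$ for arbitrary $P$ and only the ``upgrade'' to $\deg(\Delta_A)$ is missing, but Theorem~\ref{main-thm} assumes $P$ simple, so even $c(P)=0$ is not established in the general case (you implicitly acknowledge this later when you talk about replacing the simplicity assumption).

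In short: there is nothing in the paper to grade against, your program matches the paper's own hint, and the conjecture remains open.
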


We believe that there should be a 
combinatorial proof in the singular case in a similar fashion as the proof above, using 
the purely combinatorial formulas for the degree of the $A$-discriminant associated to a lattice polytope 
\cite{Est08, MT08}. 

It is already known \cite{CC07, Est08} that if $P$ defines a (non necessarily regular) dual defect 
toric variety, then $P$ has lattice width one, i.e., it is a Cayley polytope of at least 
two lattice polytopes. Moreover, S.~Di Rocco and C.~Casagrande generalized the characterization of dual defect toric manifolds \cite{CS08} 
to the $\Q$-factorial case. There are  known criteria for defectiveness valid in the singular case \cite{DFS07,MT08}, but the precise
combinatorial classification of the structure of lattice configurations $A$ such that $X_A$ is dual defect in
full generality is an open problem.

We end with an example derived from the results of \cite{BDR08} in the classification of
self-dual toric varieties, and results of \cite{CC07}. We construct a
singular dual defect toric variety associated with  a (Cayley) lattice polytope  $P$ of dimension $n=6$ 
which does {\em not} have 
the structure of a Cayley polytope of lattice polytopes $P_0, \ldots, P_k \subseteq \R^m$, where 
$k $ is {\em sufficiently big} compared to $n$ (that is $k$ is not bigger than $\frac{6}{2}=3$). 
Set $m=4$, $k=2$. Let $P_0$ be the triangle with vertices $\{(0,0,0,0), (2,0,0,1), (1,0,0,1)\}$,  $P_1$ the triangle
with vertices $\{(0,0,0,0), (0,2,0,1), (0,1,0,1)\}$, $P_2$ the triangle with vertices $\{ (0,0,0,0),
(0,0,2,1), (0,0,1,0)\}$, and let $P$ be the $6$-dimensional Cayley polytope $P = P_0*P_1*P_2$. 
It follows from \cite{BDR08} that $P$ has exactly $9$ lattice points (which can be easily verified). 
The associated toric variety $(X,L)$ is defective with dual defect equal to $1$ (in fact, it is self-dual).
We thank Andreas Paffenholz for the computation of the codegree of $P$, which is equal to $3$. 
So, ${\rm codeg}(P)$ is \emph{smaller} than $(6+3)/2$. In particular, in the singular case 
defectiveness does not imply high codegree and hence, it does not imply a Cayley structure as in Theorem~\ref{th:main}.

\end{document}